\newcommand{\rvline}{\hspace*{-\arraycolsep}\vline\hspace*{-\arraycolsep}}
\newtheorem{theorem}{Theorem}[section]
\newtheorem{lemma}[theorem]{Lemma}
\newtheorem{proposition}[theorem]{Proposition}
\newtheorem{conjecture}[theorem]{Conjecture}
\newtheorem{remark}[theorem]{Remark}
\newtheorem{definition}[theorem]{Definition}
\newtheorem{example}[theorem]{Example}
\newcommand{\N}{\mathbf{N}}
\newcommand{\fd}{f_{\delta}}
\newcommand{\fx}{f_{\delta+x}}
\newcommand{\fy}{f_{\delta+y}}
\newcommand{\fxy}{f_{\delta+x+y}}
\newcommand{\ird}{I_r^{(d)}}
\newcommand{\sinfty}{\mathfrak{S}_\infty}
\newcommand{\fs}{\mathfrak{S}}
\newcommand{\fp}{\mathfrak{p}}
\newcommand{\fJ}{\mathfrak{J}}
\begin{document}
\title{On the non primality of certain symmetric ideals}
\author{Hyung Kyu Jun}
\address{Department of Mathematics\\ University of Michigan\\Ann Arbor, MI
48103\\USA}
\email{\href{mailto:hkjun@umich.edu}{hkjun@umich.edu}}

%\thanks{This material is based upon work supported by the National Science
%Foundation under Grant No.\ BLAH}
%\subjclass[2020]{BLAH}

%\keywords{KEYWORDS}

\makeatletter
  \hypersetup{
    pdfsubject=\@subjclass,pdfkeywords=\@keywords
  }
\makeatother

\begin{abstract} Let $R =k[x_1, \cdots, x_n,\cdots]$ be the infinite variable polynomial ring equipped with the natural $\sinfty$ action, where $k$ is a field of characteristic zero. In recent work \cite{NS21}, Nagpal--Snowden gave an indirect proof that $\sinfty$-ideal generated by $(x_1-x_2)^{2n}$ is not $\sinfty$-prime. In this paper, we give a direct proof, with explicit elements. We further formulate some conjectures on possible generalizations of the result.
\end{abstract}

\maketitle

\setcounter{tocdepth}{1}
{\hypersetup{hidelinks}\tableofcontents}

\section{Introduction}\label{sect:intro}
\subsection{Background}
Let $k$ be a field of characteristic zero, and let $R = k[x_1, x_2, \cdots]$ be the polynomial ring in countably many variables $x_i$. The ring $R$ is equipped with a natural action of the  infinite symmetric group $\fs= \bigcup_{n \in \N} \mathfrak{S}_n$, where $\mathfrak{S}_n$ is the symmetric group of degree $n$. The ring $R$ is not noetherian, but the size of $\fs$ bridges the gap: in \cite{Co67} Cohen showed that $R$ is equivariantly noetherian with respect to $\fs$. Equivariant noetherianity of $R$ leads us to consider $\fs$-equivariant commutative algebra on $R$. One is interested in translating notions from the familiar notions from noetherian rings to a non-noetherian ring $R$, equipped with an action of $\fs$ (see \cite{NS21} for basic notions of $G$-equivariant commutative algebra). As prime ideals are one of the most important notions in commutative algebra and algebraic geometry, we are interested in $\fs$-prime ideals:

\begin{definition}\label{def:sprime} An $\mathbf{\fs}$-\textbf{ideal} of $R$ is an ideal closed under the action of $\fs$. An $\fs$-ideal $\fp$ is $\mathbf{\fs}$-\textbf{prime} if $f \cdot \sigma(g) \in \fp$ for all $\sigma \in \fs$ implies $f \in \fp$ or $g \in \fp$. 
\end{definition}
In fact, $\fs$-prime ideals of $R$ are completely classified in \cite{NS21}, and one of the main steps from the classification of $\fs$-primes in \cite{NS21} is the following: 
\begin{theorem}\label{theorem:NS}  For any positive integer $N$, the ideal $ I(N):= \langle (x_i-x_j)^{N} \rangle$ is $\fs$-prime if and only if $N$ is odd.

\end{theorem}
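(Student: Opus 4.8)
The argument splits into the two implications, and the substance lies in the ``only if'' direction---the non-primality of $I(N)$ for $N$ even---which is where the announced explicit elements appear. My plan for that direction is to take
\[
f = g = (x_1-x_2)^{N-1},
\]
equivalently the $\fs$-ideals $\mathfrak{a}=\mathfrak{b}=I(N-1)$, and to verify that they violate Definition~\ref{def:sprime}: $f\cdot\sigma(g)\in I(N)$ for every $\sigma\in\fs$, yet $f\notin I(N)$. The non-membership is the easy half: after the translation $y_i=x_i-x_1$ one has $R=k[x_1,y_2,y_3,\dots]$ and $I(N)\subseteq \mathfrak{d}^N$ with $\mathfrak{d}=(y_2,y_3,\dots)$, because $I(N)$ is generated by the forms $(y_i-y_j)^N$ of $\mathfrak{d}$-order $N$; since $(x_1-x_2)^{N-1}=(-y_2)^{N-1}$ has $\mathfrak{d}$-order $N-1<N$, it is not in $I(N)$.

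The crux is the containment $(x_1-x_2)^{N-1}\,\sigma\!\big((x_1-x_2)^{N-1}\big)\in I(N)$. Since membership in $I(N)$ can be tested after restricting to finitely many variables, an appropriate change of base point reduces this, for each $\sigma$, to one of three statements about the ideal $J_N=\big(y_i^{\,N},\,(y_i-y_j)^N\big)$, according to whether the transposed pair $\{\sigma1,\sigma2\}$ coincides with, meets, or is disjoint from $\{1,2\}$: namely (i) $y_2^{\,2N-2}\in J_N$, clear as $2N-2\ge N$; (ii) $y_2^{\,N-1}y_3^{\,N-1}\in\big(y_2^{\,N},y_3^{\,N},(y_2-y_3)^N\big)$; and (iii) $y_2^{\,N-1}(y_3-y_4)^{N-1}\in\big(y_i^{\,N},(y_i-y_j)^N:i,j\in\{2,3,4\}\big)$. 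Statement (ii) I will prove uniformly: in the Gorenstein Artinian ring $k[y_2,y_3]/(y_2^{\,N},y_3^{\,N})$, whose one-dimensional socle is spanned by $y_2^{\,N-1}y_3^{\,N-1}$, a single binomial computation shows each $(y_2-y_3)^N\cdot y_2^{\,a}y_3^{\,N-2-a}$ is a nonzero multiple of the socle, so the socle already lies in the ideal generated by $(y_2-y_3)^N$. Statement (iii) is the heart of the matter, and I expect it to be the main obstacle: this is exactly where the parity of $N$ is decisive---$y_2^{\,N-1}(y_3-y_4)^{N-1}$ lies in $J_N$ when $N$ is even but not when $N$ is odd (the case $N=3$ being an illuminating explicit check, where the cross term $y_2^{\,2}y_3y_4$ obstructs membership). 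I plan to establish (iii) for even $N$ by computing inside the finite-dimensional quotient $k[y_2,y_3,y_4]/J_N$, or its Gorenstein cover $k[y_2,y_3,y_4]/(y_2^{\,N},y_3^{\,N},y_4^{\,N})$, locating $y_2^{\,N-1}(y_3-y_4)^{N-1}$ relative to the subideal generated by the three mixed $N$-th powers; the difficulty is to replace a case-by-case linear-algebra verification by a closed-form reason---either a description of the relevant inverse system, or an explicit presentation of $y_2^{\,N-1}(y_3-y_4)^{N-1}$ as a combination of the six generators whose solvability hinges on $N\bmod 2$.

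For the ``if'' direction, $N$ odd $\Rightarrow I(N)$ is $\fs$-prime, I would first record the structural reduction that narrows the problem: writing $R/I(N)\cong k[t]\otimes_k B_N$ with $t=x_1$ and $B_N=k[y_2,y_3,\dots]/J_N$ (local with nil maximal ideal), $R/I(N)$ is free, hence torsion-free, over $k[t]$, so any element lying outside $\mathfrak{d}/I(N)$ is a nonzerodivisor; thus in a hypothetical violation of $\fs$-primality both $f$ and $g$ must lie in $\mathfrak{d}$. Passing to $\mathfrak{d}$-adic leading forms---faithfully, since $I(N)$ is homogeneous for the $y$-grading and $\operatorname{gr}_{\mathfrak{d}}R$ is a domain---a violation would produce nonzero $y$-homogeneous forms $f_a,g_b$ of positive degree with $f_a\cdot\sigma(g_b)\in I(N)$ for all $\sigma$, and the failure of the candidate $f=g=(x_1-x_2)^{N-1}$ is precisely the negative half of statement (iii). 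Turning this into a proof that \emph{no} violation exists is, however, genuinely harder than the even case---disjoint-support products $P\cdot\sigma(Q)$ do not obstruct membership as crudely as one might hope (e.g. $y_2^{\,2}y_4^{\,2}\in I(3)$)---so for this direction I would invoke the classification of $\fs$-primes of $R$ from \cite{NS21}, or else carry out a dedicated analysis of the $\fs$-ideals with radical $\mathfrak{d}$; the novelty of the present treatment is confined to the explicit even case.
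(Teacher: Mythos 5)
A preliminary remark on scope: this paper does not actually prove Theorem~\ref{theorem:NS}; it is quoted from \cite{NS21}. What the paper does prove is the explicit-witness refinement, Theorem~\ref{thm:nonprimality}, whose technical content is exactly your case~(iii). Measured against that, your framework is sound and essentially the paper's: you choose the same witness $f=g=(x_1-x_2)^{N-1}$, prove non-membership by a degree (order) count, and reduce the containment $f\cdot\sigma(g)\in I(N)$ to a finite-variable statement. Your split into cases (i)--(iii) according to $|\{1,2\}\cap\{\sigma1,\sigma2\}|$ is a reasonable organization (the paper instead states one four-variable containment, Proposition~\ref{prop:sigma(13)(24)}, which specializes to all three of your cases), and your socle proof of case~(ii) is a nice self-contained observation that does not appear in the paper. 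Deferring the ``if'' direction to \cite{NS21} is also what the paper does, so no complaint there.

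However, you explicitly do not prove case~(iii) --- you ``plan to establish'' it and note that ``the difficulty is to replace a case-by-case linear-algebra verification by a closed-form reason.'' That closed-form reason is the entire technical content of the paper and it is absent from your proposal. Concretely: after the change of variables in Lemma~\ref{lemma:changeofvariables} the target becomes $(xy)^{2n-1}\in\langle\delta^{2n},(\delta+x)^{2n},(\delta+y)^{2n},(\delta+x+y)^{2n}\rangle$; dehomogenizing with $s=1+x/\delta$, $t=y/\delta$ and comparing coefficients packages the problem as a matrix equation $\Lambda C + D\Lambda = B$, where $\Lambda$ is unipotent upper-triangular with binomial entries $\binom{N}{\,\cdot\,}$, $B$ is diagonal with $j$-th entry $(-1)^{N-1-j}\binom{N-1}{j}$, and $C,D$ are strictly lower-triangular unknowns. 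Lemma~\ref{anti-diagonal} then produces a solution with $D=-C^{\tau}$ ($\tau$ the anti-diagonal transpose), conditional on nonsingularity of the corner minors of $\Lambda$, which Proposition~\ref{prop:minormatrix} establishes via the Jacobi--Trudi identity and positivity of $s_\lambda(1,\dots,1)$. The parity of $N$ enters precisely through the hypotheses on $B$: $B^{\tau}=-B$ and the vanishing of $B$'s anti-diagonal hold because $N$ is even (and fail for $N$ odd, where the middle diagonal entry sits on the anti-diagonal and is nonzero). Without this argument, or a substitute for it, your proposal does not establish the containment in case~(iii), and hence does not prove the nontrivial direction of Theorem~\ref{theorem:NS}.
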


Although Theorem \ref{theorem:NS} tells us that $I(2n)$ is not $\fs$-prime, the proof in \cite{NS21} does not provide $f,g$ such that $f \sigma(g) \in I(2n)$ for all $\sigma$. The goal of this paper is to provide such an explicit pair. 

\subsection{Main Results} The goal of this paper is to prove the following theorem:

\begin{theorem}\label{thm:nonprimality} If $f = g = (x_1-x_2)^{2n-1}$, then for any $\sigma \in \fs$, we have $f \cdot \sigma(g) \in I(2n)$.
\end{theorem}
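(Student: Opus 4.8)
The plan is to reduce the theorem to a single product of $(2n-1)$-st powers of variable differences, dispatch that with one elementary congruence modulo $I(2n)$, and settle the one hard configuration by a symmetry argument. Writing $\{a,b\}=\{\sigma(1),\sigma(2)\}$, it suffices to show $(x_1-x_2)^{2n-1}(x_a-x_b)^{2n-1}\in I(2n)$, and since $I(2n)$ is an $\fs$-ideal we may apply a permutation of the variables and assume the unordered pair $\{a,b\}$ is one of $\{1,2\}$, $\{2,3\}$, or $\{3,4\}$, according as $|\{a,b\}\cap\{1,2\}|$ is $2$, $1$, or $0$. The one tool I would use is the congruence
\[
(u+v)^{4n-2}\equiv\binom{4n-2}{2n-1}\,u^{2n-1}v^{2n-1}\pmod{\langle u^{2n},v^{2n}\rangle}
\]
for linear forms $u,v\in R$, which is immediate from the binomial theorem since every other term has $u$-degree $\ge 2n$ or $v$-degree $\ge 2n$. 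When $u,v$ are differences of variables, $\langle u^{2n},v^{2n}\rangle\subseteq I(2n)$, and as $\binom{4n-2}{2n-1}$ is a nonzero scalar in characteristic zero the congruence lets us replace $u^{2n-1}v^{2n-1}$ by the single power $(u+v)^{4n-2}$ modulo $I(2n)$.

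The first two cases are then quick. If $\{a,b\}=\{1,2\}$ the product is $\pm(x_1-x_2)^{4n-2}$, a multiple of $(x_1-x_2)^{2n}$. If $\{a,b\}=\{2,3\}$, apply the congruence with $u=x_1-x_2$ and $v=x_2-x_3$, so $u+v=x_1-x_3$; then $(x_1-x_2)^{2n-1}(x_2-x_3)^{2n-1}$ is, up to the nonzero scalar $\binom{4n-2}{2n-1}$, congruent modulo $I(2n)$ to $(x_1-x_3)^{4n-2}=(x_1-x_3)^{2n}(x_1-x_3)^{2n-2}\in I(2n)$.

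The case $\{a,b\}=\{3,4\}$ is the crux. I would introduce the three products $P_1=(x_1-x_2)^{2n-1}(x_3-x_4)^{2n-1}$, $P_2=(x_1-x_3)^{2n-1}(x_2-x_4)^{2n-1}$, $P_3=(x_1-x_4)^{2n-1}(x_2-x_3)^{2n-1}$ attached to the three perfect matchings of $\{1,2,3,4\}$, and use the fact that each of the linear forms
\[
w=(x_1-x_2)+(x_3-x_4),\qquad w'=(x_1-x_2)+(x_4-x_3),\qquad w''=(x_1-x_3)+(x_2-x_4)
\]
can be written as a sum of two variable differences in two essentially different ways, coming from two different matchings --- for instance $w=(x_1-x_2)+(x_3-x_4)=(x_1-x_4)+(x_3-x_2)$, and similarly for $w'$ and $w''$. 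Feeding both presentations of each of $w,w',w''$ into the congruence, cancelling the common factor $\binom{4n-2}{2n-1}$, and tracking the sign $(x_j-x_i)^{2n-1}=-(x_i-x_j)^{2n-1}$ produced by the odd powers, one obtains $P_1\equiv -P_3$, $P_1\equiv P_2$, and $P_2\equiv P_3$ modulo $I(2n)$. Combining, $-P_3\equiv P_1\equiv P_2\equiv P_3\pmod{I(2n)}$, so $2P_3\equiv 0$; hence $P_3\in I(2n)$ in characteristic zero, and then $P_1,P_2\in I(2n)$ as well, so in particular $(x_1-x_2)^{2n-1}(x_3-x_4)^{2n-1}\in I(2n)$.

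The only genuinely nonobvious step is this last one: the device of playing the three matchings off one another through the double presentations of $w,w',w''$ is not the first thing one tries, whereas everything else is just the one binomial identity plus sign bookkeeping. A reassuring check is that the argument really needs the exponent to be odd --- the minus sign in $P_1\equiv -P_3$ is exactly the effect of $2n-1$ being odd, so for an even exponent the three relations would only say $P_1\equiv P_2\equiv P_3$ and impose nothing, as it must be, since by Theorem~\ref{theorem:NS} the ideal $I(N)$ is $\fs$-prime for odd $N$ and admits no such witness. Everything else is routine: checking the two-fold presentations of $w,w',w''$ and carefully tracking the signs from the odd powers.
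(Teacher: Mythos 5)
Your argument is correct, and it takes a genuinely different and more elementary route than the paper. I checked the key points: the congruence $(u+v)^{4n-2}\equiv\binom{4n-2}{2n-1}u^{2n-1}v^{2n-1}\pmod{\langle u^{2n},v^{2n}\rangle}$ is immediate; the reduction to the three cases $\{1,2\},\{2,3\},\{3,4\}$ is valid since a permutation fixing $\{1,2\}$ setwise changes $(x_1-x_2)^{2n-1}$ only by a sign; and the crucial case $\{3,4\}$ goes through exactly as you say. Writing $w=x_1-x_2+x_3-x_4=(x_1-x_2)+(x_3-x_4)=(x_1-x_4)+(x_3-x_2)$ gives $P_1\equiv-P_3$; writing $w'=x_1-x_2-x_3+x_4$ two ways gives $P_1\equiv P_2$; writing $w''=x_1+x_2-x_3-x_4$ two ways gives $P_2\equiv P_3$; combined, $2P_3\equiv 0\pmod{I(2n)}$, and in characteristic zero $P_3\in I(2n)$, hence $P_1\in I(2n)$. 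Your sign accounting is accurate, and the remark that the minus sign in $P_1\equiv-P_3$ is precisely where odd exponents are needed is a nice sanity check against Theorem~\ref{theorem:NS}.

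Where your proof and the paper's diverge: the paper reduces to the sharper Proposition~\ref{prop:sigma(13)(24)}, namely that $(x_1-x_2)^{2n-1}(x_3-x_4)^{2n-1}$ lies in the ideal generated by only the four \emph{cross} terms $(x_1-x_3)^{2n},(x_1-x_4)^{2n},(x_2-x_3)^{2n},(x_2-x_4)^{2n}$, then proves this by a change of variables to $J(2n)=\langle\delta^{2n},(\delta+x)^{2n},(\delta+y)^{2n},(\delta+x+y)^{2n}\rangle$ and an explicit linear-algebra construction (a matrix equation $\Lambda C+D\Lambda=B$, solved via the anti-diagonal-transposition Lemma~\ref{anti-diagonal} and the Schur-polynomial nonvanishing in Proposition~\ref{prop:minormatrix}). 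Your matching-symmetry argument is considerably shorter and avoids all of that machinery, at the cost of only showing membership in the ideal generated by all six $(x_i-x_j)^{2n}$ rather than the four cross ones; that is enough for Theorem~\ref{thm:nonprimality}, but it does not directly recover Proposition~\ref{prop:sigma(13)(24)}, which is the form the paper leans on for the contracted-ideal statement Theorem~\ref{theorem:generalization} and Conjecture~\ref{Conj:contracted ideal}. In short: your proof is cleaner and more conceptual for the theorem as stated, while the paper's heavier construction buys explicit witnesses and the stronger four-generator inclusion that feeds its further results.
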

Theorem \ref{thm:nonprimality} implies that $I(2n)$ is not $\fs$-prime for any $n$ as $(x_1-x_2)^{2n-1} \not\in I(2n)$, because any nonzero element of a homogeneous ideal $I(2n)$ is of degree $\geq 2n$. 

For $n=1$, we can directly show that Theorem \ref{thm:nonprimality} holds:

\begin{example} Let $f=g= x_1-x_2,$ and $\sigma\in \fs$ be any permutation. Then, as any nonzero element of $I(2)$ has degree at least $2$, we see that $f =g\not\in I(2)$. However, note that $f\cdot \sigma(g) \in I(2)$ from the following identity:
\begin{equation}\label{n=1}
    (X-Z)^2 -(Y-Z)^2-(X-W)^2+(Y-W)^2=-2(X-Y)(Z-W).
\end{equation}
If $\sigma$ maps $x_1$ to $x_i$ and $x_2$ to $x_j$, set $X = x_1$, $Y = x_2$, $Z = x_i$, and $W = x_j$, and we see that $f\sigma(g) \in I(2)$ from (\ref{n=1}).
\end{example}

\subsection{Open problems}
In \S\ref{sect:proof}, we will see that Theorem \ref{thm:nonprimality} is related to a contraction of an ideal $I$ in $k[\delta, x,y]$ to $k[x,y]$. See Theorem \ref{theorem:generalization}. We then have two conjectures that generalize Theorem~\ref{theorem:generalization}: Conjecture~\ref{Conj:contracted ideal} explicitly gives monomial generators of the contracted ideal, and Conjecture \ref{Conj:more variables} generalizes Theorem \ref{theorem:generalization} for more variables. See \S\ref{sect:openproblems} for details. 

\subsection{Outline}
In \S\ref{sect:proof}, we give a proof of Theorem \ref{thm:nonprimality}. In \S \ref{sect:openproblems} we discuss about conjectures that generalize our main results.

\subsection*{Acknowledgments}

I am grateful to my advisor Andrew Snowden for his support and for numerous fruitful conversations and suggestions, to Rohit Nagpal for sharing his thoughts on Schur polynomials and Grobner methods, and David E Speyer for insights on morphisms involviong triangular matrices. 

\section{Preliminary linear algebra}

In this section, we will discuss two results that will play key roles in the proof of the Theorem \ref{thm:nonprimality}.

\subsection{Determinant of a matrix with binomial entries}

\begin{proposition} \label{prop:minormatrix} Let $A$ be the following lower-triangular unipotent $N \times N$ matrix:
\begin{equation} A = 
    \begin{bmatrix}
    1 & 0 & \cdots &0 & 0\\ \\
    {N \choose 1} & 1 &\cdots &0 & 0\\ \\
    \vdots &\vdots &\ddots  &\vdots & \vdots \\ \\
    {N \choose N-2} & {N \choose N-3}& \cdots & 1& 0\\ \\
    {N \choose N-1} & {N \choose N-2} &\cdots & {N \choose 1} & 1
    \end{bmatrix}.
\end{equation}
For $1 \leq m \leq N$, let $A_m$ be the following lower-left submatrix of $A$
\begin{equation} A_m = 
    \begin{bmatrix}
    {N \choose N - m} & {N \choose N - m - 1} &\cdots &0 &0\\ \\
    \vdots &\vdots &\ddots &\vdots &\vdots\\ \\
    {N \choose N-2} & {N \choose N-3}& \cdots &{N \choose N - m - 2}& {N \choose N - m - 1}\\ \\
    {N \choose N-1} & {N \choose N-2} &\cdots &{N \choose N - m - 1} & {N \choose N - m}
    \end{bmatrix}.
\end{equation}
Then, $A_m$ is an invertible matrix. 
\end{proposition}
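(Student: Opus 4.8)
The plan is to recognize $A$ as the $N$-th power of a bidiagonal matrix and then evaluate the relevant minor of that power by iterated Cauchy--Binet (equivalently, by the Lindström--Gessel--Viennot lemma), showing it is a strictly positive integer.

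First I would observe that $A = (I+L)^{N}$, where $I$ is the $N\times N$ identity and $L$ is the nilpotent lower-shift matrix (so $(L)_{ij}=1$ if $i=j+1$ and $0$ otherwise). Indeed $(I+L)^{N}=\sum_{k=0}^{N}\binom{N}{k}L^{k}$, and $(L^{k})_{ij}$ equals $1$ when $i=j+k$ and $0$ otherwise, so the $(i,j)$ entry of $(I+L)^{N}$ is $\binom{N}{i-j}$, which is exactly $A_{ij}$. Next, $A_m$ is precisely the submatrix of $A$ on the row set $I:=\{N-m+1,\dots,N\}$ and the column set $J:=\{1,\dots,m\}$, so $\det A_m$ is the minor $\Delta_{I,J}(A)$ of $A$ on those rows and columns. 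Applying Cauchy--Binet $N-1$ times to the factorization $A=(I+L)\cdots(I+L)$ then gives
\[
\det A_m \;=\; \Delta_{I,J}\!\big((I+L)^{N}\big) \;=\; \sum_{I=K_0,\,K_1,\,\dots,\,K_N=J}\ \prod_{\ell=1}^{N}\Delta_{K_{\ell-1},\,K_{\ell}}(I+L),
\]
the sum running over all chains of $m$-element subsets $K_\ell\subseteq\{1,\dots,N\}$ with $K_0=I$ and $K_N=J$. The key point is that every minor $\Delta_{K,K'}(I+L)$ lies in $\{0,1\}$, hence is $\geq 0$: writing $K=\{k_1<\dots<k_m\}$, $K'=\{k'_1<\dots<k'_m\}$, a permutation $\sigma$ contributes a nonzero term to the Leibniz expansion only if $k'_{\sigma(a)}\in\{k_a-1,k_a\}$ for all $a$; if $\sigma\neq\mathrm{id}$, choosing $a<b$ with $\sigma(a)>\sigma(b)$ yields $k'_{\sigma(a)}\geq k'_{\sigma(b)}+1\geq (k_b-1)+1=k_b\geq k_a+1$, contradicting $k'_{\sigma(a)}\leq k_a$; so only $\sigma=\mathrm{id}$ survives and $\Delta_{K,K'}(I+L)=\prod_a (I+L)_{k_a,k'_a}\in\{0,1\}$.

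Since every summand above is therefore a nonnegative integer, it suffices to exhibit one chain of weight $1$. Take the chain that, for the first $N-m$ steps, replaces the current set by the set obtained by subtracting $1$ from every element, and for the last $m$ steps leaves the set unchanged. This is well defined: starting from $\{N-m+1,\dots,N\}$, the minimal element reaches $1$ only at the end of the descending phase, so each intermediate $K_\ell$ is a genuine $m$-subset of $\{1,\dots,N\}$, and after $N-m$ descents we arrive at $\{1,\dots,m\}=J$. For a descent step the matrix $\big((I+L)_{k_a,k_b-1}\big)_{a,b}$ is upper triangular with $1$'s on the diagonal (its $(a,b)$ entry vanishes unless $k_a\in\{k_b-1,k_b\}$, i.e. $a\leq b$), and for a stay step the relevant submatrix of $I+L$ has row and column index set both equal to $K$, hence is lower triangular with $1$'s on the diagonal; in either case the minor equals $1$. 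Thus $\det A_m\geq 1>0$ and $A_m$ is invertible.

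The one genuinely creative step is the identification $A=(I+L)^{N}$ (equivalently, choosing a lattice-path model in which the sources $\{1,\dots,m\}$ and sinks $\{N-m+1,\dots,N\}$ occur in compatible order, so that only the identity permutation survives in Lindström--Gessel--Viennot); after that, the nonnegativity of the $I+L$-minors and the explicit positive-weight chain are routine bookkeeping. One could instead try to pin down a closed product formula for $\det A_m$, but for the invertibility statement this "nonnegative summands plus one witness" argument is the most economical route.
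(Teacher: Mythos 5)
Your proof is correct and takes a genuinely different route from the paper's. The paper rewrites the binomial entries as elementary symmetric polynomials evaluated at $(1,\dots,1)$, invokes the (dual) Jacobi--Trudi identity to identify $\det A_m$ with a Schur polynomial $s_\lambda$, and concludes positivity from the Weyl dimension formula $s_\lambda(1,\dots,1)=\prod_{i<j}\frac{\lambda_i-\lambda_j+j-i}{j-i}>0$. You instead factor $A=(I+L)^N$ with $L$ the lower-shift matrix, apply Cauchy--Binet to reduce the minor to a sum over chains of $m$-subsets, observe that every minor of $I+L$ lies in $\{0,1\}$ (so all summands are nonnegative), and exhibit one explicit chain of weight $1$. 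Both arguments are, at bottom, manifestations of the same total nonnegativity --- the Lindstr\"om--Gessel--Viennot machinery underlying your Cauchy--Binet computation is also what makes Jacobi--Trudi minors nonnegative --- but your version is more elementary and more self-contained: it avoids symmetric function theory and representation theory altogether, and it makes explicit the partition/indexing bookkeeping that the paper's proof leaves implicit. The trade-off is that the paper's route yields a closed positive product formula for $\det A_m$, whereas your argument only certifies $\det A_m\ge 1$; for the invertibility claim at hand that weaker conclusion is of course entirely sufficient.
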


\begin{proof} Let $A_k$ be the $k\times k$ submatrix of the matrix above.  
Note that ${n \choose m}= e_m(1, \cdots, 1)$ where $e_m$ is the elementary symmetric polynomial 

\begin{equation*}
    e_m(x_1, \cdots, x_n) := \sum_{1 \leq i_1 \leq \cdots \leq i_m\leq n}X_{i_1} \cdots X_{i_m}.
\end{equation*}
Then, we see that the determinant of $A_k$ equals the determinant of the $k \times k$ matrix $B = (b_{i,j})$, evaluated at $(1, \cdots, 1)$, where all the entries $b_{i,j}$ are the elementary symmetric polynomials, corresponding to $(i,j)^{th}$ entry of $A_k$. Using Jacobi--Trudi identity (see \cite[pp.~40--41]{Ma98} or \cite[pp.455]{FH13}), we see that $\det(B)=s$, where $s$ is a Schur polynomial. Then the value of $s$ at $(1, \cdots, 1)$ is gives the dimension of the irreducible representation of $GL_n$ with highest weight $\lambda$: that is, if $ \lambda = (\lambda_1\geq \cdots\geq \lambda_k\geq 0)$ and $s= s_\lambda$ is a Schur polynomial associated to the partition $\lambda$, we have the following closed-form expression
\begin{equation*}
    s_\lambda(1,\cdots, 1) = \prod_{i<j}\frac{\lambda_i-\lambda_j+j-i}{j-i} >0. \qedhere
\end{equation*}
\end{proof}

\begin{remark} The above proposition also applies to the matrix whose $(i,j)$th entry is ${n \choose m+i-j}$. The same logic as above applies, so $\det(A)$ is a value of the corresponding Schur polynomial at $(1, \cdots, 1)$. Hence, $\det(A)$ is positive and in particular, nonzero. 
\end{remark}

\subsection{Anti-diagonal transposition}

For $n \times n$ matrix $A$, we define $A^{\tau}$ to be an anti-diagonal transposition,
\[A^{\tau} := JA^TJ,\]
where $J$ is the following matrix: 
\[
J = 
\begin{bmatrix}
0 & 0 & 0 & \cdots & 0 & 0 & 1\\ \\
0 & 0 & 0 & \cdots & 0 & 1 & 0 \\ \\ 
\vdots & \vdots & \vdots & \ddots & \vdots & \vdots & \vdots \\ \\ 
0 & 1 & 0 & \cdots & 0 & 0 & 0 \\ \\
1 & 0 & 0 & \cdots & 0 & 0 & 0
\end{bmatrix}.
\]
Note that for any two matrices $A$ and $B$, we have
\[(A + B)^{\tau} = A^{\tau} + B^{\tau},\]
and 
\[ (AB)^{\tau} = B^{\tau}A^{\tau}. \]
 
We need the following lemma:
\begin{lemma}\label{anti-diagonal} If $A = (a_{i,j})_{1\leq i, j \leq n}, B = (b_{i,j})_{1\leq i, j \leq n} $ are $n \times n$ matrices satisfying:

\begin{itemize}
    \item For any $1 \leq m \leq n$, $m \times m $ ``upper-right'' submatrix\[A_m = 
    \begin{bmatrix}
    a_{1, n - m + 1} & a_{1, n - m + 2} & \cdots & a_{1, n} \\
    \vdots & \vdots & \ddots & \vdots \\
    a_{m, n - m + 1} & a_{m, n + m + 2} & \cdots & a_{m, n}
    \end{bmatrix}\] of $A$ is nonsingular
    \item All the anti-diagonal entries of $B$ are zero
    \item $B^{\tau} = -B$.
\end{itemize}
Then there exists a unique strictly lower-triangular $n \times n$ matrix $X$ satisfying $AX - (AX)^{\tau} = B.$
\end{lemma}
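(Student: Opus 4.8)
The plan is to linearize the equation and, via the interaction of the anti-diagonal transpose with the reversal matrix $J$, reduce it to a system solvable by ordinary triangular elimination. Regard $X \mapsto AX - (AX)^{\tau}$ as a linear map on strictly lower-triangular matrices. Since $(AX)^{\tau} = J(AX)^{T}J$, left-multiplying the equation $AX - (AX)^{\tau} = B$ by $J$ and using $J^{2} = I$, $J^{T} = J$ turns it into
\[
CX - (CX)^{T} = D, \qquad C := JA, \quad D := JB .
\]
Here $D$ is an arbitrary skew-symmetric matrix: the hypotheses $B^{\tau} = -B$ and the vanishing of the anti-diagonal of $B$ are together equivalent to $D^{T} = -D$. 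Likewise the hypothesis that every $m \times m$ upper-right submatrix of $A$ is nonsingular translates, since that block is the bottom-right $m \times m$ block of $C$ up to reversing its rows, into: every trailing principal submatrix of $C$ is nonsingular. So it suffices to show that for such a $C$ and any skew-symmetric $D$ there is a unique strictly lower-triangular $X$ with $CX - (CX)^{T} = D$.

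Next I would exploit the factorization forced by the corner hypothesis. Because all trailing principal minors of $C$ are nonzero, $C$ has a factorization $C = UL$ with $U$ unit upper-triangular and $L$ invertible lower-triangular --- the ``$UL$'' analogue of the usual $LU$-decomposition, obtained by conjugating the leading-principal-minor criterion by $J$. Writing $X = L^{-1}Z$, which is a bijection of the strictly lower-triangular matrices onto themselves since $L$ is invertible lower-triangular, the problem becomes: find a unique strictly lower-triangular $Z$ with $UZ - (UZ)^{T} = D$.

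Finally I would solve this explicitly by downward induction on rows. Because $U$ is unit upper-triangular and $Z$ is strictly lower-triangular, one computes for $i > j$ that $(UZ)_{ij} = Z_{ij} + \sum_{k > i} U_{ik} Z_{kj}$ and $(UZ)_{ji} = \sum_{k > i} U_{jk} Z_{ki}$, so the $(i,j)$-entry of the equation reads
\[
Z_{ij} = D_{ij} + \sum_{k > i} \bigl( U_{jk} Z_{ki} - U_{ik} Z_{kj} \bigr) ,
\]
whose right-hand side only involves $D$ and the rows of $Z$ strictly below row $i$. Reading these from $i = n$ downward determines every below-diagonal entry of $Z$, uniquely and with no compatibility constraint; the diagonal equations are $0 = D_{ii} = 0$, and the above-diagonal equations are the negatives of the below-diagonal ones because both $UZ - (UZ)^{T}$ and $D$ are skew-symmetric, so they hold automatically. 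This produces the unique $Z$, hence the unique $X$.

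The crux is really the first paragraph: that conjugation by $J$ simultaneously converts $\tau$ into ordinary transposition, converts the conditions on $B$ into plain skew-symmetry, and converts the upper-right corner conditions on $A$ into exactly the trailing-minor conditions needed for the $UL$-factorization. After that the argument is routine triangular elimination; as a consistency check, the source and target both have dimension $\binom{n}{2}$, so existence and uniqueness are equivalent and proving either suffices.
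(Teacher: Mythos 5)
Your proof is correct, but it follows a genuinely different route from the paper's. The paper argues directly: it partitions the unknown $X$ into its columns $X_1,\dots,X_{n-1}$ (each $X_i$ having $n-i$ free entries), writes out $AX$ and $(AX)^\tau$ entrywise, observes that matching the strictly-upper-anti-diagonal part of $AX-(AX)^\tau$ with that of $B$ gives a linear system whose coefficient matrix is block upper-triangular with diagonal blocks $A_{n-1},\dots,A_1$, and concludes from nonsingularity of those blocks. Your proof instead conjugates by $J$ first, turning $\tau$ into the ordinary transpose and turning the pair of hypotheses on $B$ into plain skew-symmetry of $D=JB$, then exploits a $UL$-factorization $JA=UL$ (whose existence is exactly what the trailing-minor condition guarantees) to reduce to the case of a unit upper-triangular coefficient, and finally solves by descending row recursion. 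Both approaches are valid; yours isolates the conceptual role of $J$ and of skew-symmetry more cleanly and replaces the explicit block-matrix bookkeeping with a standard triangular factorization, while the paper's direct computation is more self-contained and, as a small bonus, only invokes the nonsingularity of $A_1,\dots,A_{n-1}$, never of $A_n=A$ itself (your $UL$-step does use $\det A\neq 0$, though that is also part of the stated hypothesis so nothing is lost). One small presentational point: the statement that $B^\tau=-B$ together with the vanishing of the anti-diagonal is equivalent to $D^T=-D$ is true, but in characteristic zero the first hypothesis already forces the second, so the equivalence is really just $B^\tau=-B\iff D^T=-D$; this is harmless but worth being aware of.
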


\begin{proof}
Since $X$ is strictly lower triangular matrix, write 
\[
X = 
\begin{bmatrix}
 \big{|}  & \big{|} & \cdots &\big{|} &\big{|} \\
X_1 & X_2 & \ddots & X_{n -1} & X_n \\
\big{|}  & \big{|} & \cdots &\big{|} &\big{|}
\end{bmatrix}, 
\]
where 
\[X_i = 
\begin{bmatrix}
0\\
\vdots\\
0\\
x_{i + 1,i}\\
\vdots\\
x_{n, i}
\end{bmatrix}
\] is a  column vector with $i$ zeros from the top. Specifically, $X_n = 0$. Also write
\[
A =
\begin{bmatrix}
\text{---} \hspace{-0.2cm} & A_1 & \text{---} \hspace{-0.2cm} \\
\text{---} \hspace{-0.2cm} & A_2 &\text{---} \hspace{-0.2cm} \\
\vdots & \ddots & \vdots \\
\text{---} \hspace{-0.2cm} & A_n & \text{---} \hspace{-0.2cm}
\end{bmatrix},
\]
where $A_j$'s are horizontal $1 \times n$ vectors. Then, we see that
\[
AX = 
\begin{bmatrix}
A_1X_1 & A_1X_2 & \cdots & A_1X_{n-1} & 0 \\
A_2X_1 & A_2X_2 & \cdots & A_2X_{n-1} & 0 \\ 
\vdots & \vdots & \ddots & \vdots &\vdots \\
A_{n-1}X_1 & A_{n-1}X_2 & \cdots & A_{n-1}X_{n-1} & 0\\
A_nX_1 & A_nX_2 & \cdots & A_nX_{n-1} & 0
\end{bmatrix}, 
\]
and 
\[
(AX)^{\tau} = 
\begin{bmatrix}
0 & 0 & \cdots & 0 & 0 \\
A_{n}X_{n-1} & A_{n-1}X_{n-1} & \cdots & A_2X_{n-1} & A_1X_{n-1} \\ 
\vdots & \vdots & \ddots & \vdots &\vdots \\
A_nX_2& A_{n-1}X_2 & \cdots & A_2X_{2} & A_1X_2\\
A_nX_1 & A_{n-1}X_1 & \cdots & A_2X_{1} & A_1X_1
\end{bmatrix}, 
\]
Hence, we see that 
\[
AX - (AX)^\tau = 
\begin{bmatrix}
A_1X_1 & A_1X_2 & \cdots & A_1X_{n-1} & 0 \\
A_2X_1 - A_nX_{n - 1} & A_2X_2 - A_{n-1}X_{n-1} & \cdots & 0 & - A_1X_{n-1} \\
\vdots  & \vdots & \ddots & \vdots & \vdots \\
A_{n-1}X_1 - A_nX_2 & 0 & \cdots & - (A_2X_2 -A_{n-1}X_{n-1}) & -A_1X_2 \\
0 & - (A_{n-1}X_1 - A_nX_2) & \cdots & -(A_2X_1 - A_n X_{n-1}) & -A_1X_1
\end{bmatrix}.
\]
Thus, if we let
\[B = 
\begin{bmatrix}
b_{1,1} & b_{1, 2} & \cdots & b_{n-1, n} & 0 \\
b_{2,2} & b_{2, 2} & \cdots & 0 & -b_{1,1} \\
\vdots & \vdots & \ddots & \vdots & \vdots \\
b_{n-1, 1} & 0 & \cdots & -b_{2,2} & - b_{1,2} \\
0 & -b_{n-1, 1} & \cdots &-b_{1,2} & -b_{1,1}
\end{bmatrix}\]
we only need to check that there exists $X_1 \cdots X_{n-1}$ so that strictly upper anti-diagonal entries of $C:= AX - (AX)^\tau$ and those of $B$ match. 

If we look at $i$th columm($0 < i < n$) of $C$ and that of $B$, we require that
\[
\begin{bmatrix}
\text{---} \hspace{-0.2cm} & A_1 & \text{---} \hspace{-0.2cm} \\
\vdots & \vdots & \vdots \\
\text{---} \hspace{-0.2cm} & A_{n-i} & \text{---} \hspace{-0.2cm}
\end{bmatrix} 
\begin{bmatrix}
\vert \\
X_i \\
\vert
\end{bmatrix} 
-
\begin{bmatrix}
\text{---} \hspace{-0.2cm} & 0 & \text{---} \hspace{-0.2cm} \\
\vdots & \vdots & \vdots \\
\text{---} \hspace{-0.2cm} & 0 & \text{---} \hspace{-0.2cm}\\

\text{---} \hspace{-0.2cm} & A_{n - i + 1} & \text{---} \hspace{-0.2cm}
\end{bmatrix} 
\begin{bmatrix}
\vert \\
X_{i + 1} \\
\vert
\end{bmatrix} 
+ \cdots + 
\begin{bmatrix}
\text{---} \hspace{-0.2cm} & A_{n - i + 1} & \text{---} \hspace{-0.2cm} \\
\text{---} \hspace{-0.2cm} & 0 & \text{---} \hspace{-0.2cm}\\
\vdots & \vdots & \vdots \\
\text{---} \hspace{-0.2cm} & 0 & \text{---} \hspace{-0.2cm}
\end{bmatrix} 
\begin{bmatrix}
\vert \\
X_{n - 1} \\
\vert
\end{bmatrix} 
=
\begin{bmatrix}
b_{1,i}\\
b_{2, i}\\
\vdots\\
b_{n - i, i}
\end{bmatrix}
\]

Here, recall that $X_i$ is a colum vector with $i$ consecutive zeros from the top. Hence, 
\[
\begin{bmatrix}
\text{---} \hspace{-0.2cm} & A_1 & \text{---} \hspace{-0.2cm} \\
\vdots & \vdots & \vdots \\
\text{---} \hspace{-0.2cm} & A_{n-i} & \text{---} \hspace{-0.2cm}
\end{bmatrix} 
\begin{bmatrix}
\vert \\
X_i \\
\vert
\end{bmatrix} = 
\begin{bmatrix}
  \bold{0} & \rvline & A_{i},
\end{bmatrix}
\begin{bmatrix}
\vert \\
X_i \\
\vert
\end{bmatrix}
\]
where $A_i$ is $i \times i$ upper right submatrix of $A$. Hence, if we write $X_i'$ to be $(n - i) \times 1$ column vector obtained by removing consecutive zeros from $X_i$, we see that (in a block matrix form), 
\[
\begin{bmatrix}
A_{n - 1} & * & * & \cdots  & *& *\\
0 & A_{n - 2} & * & \cdots &* & *\\
\vdots & \vdots & \vdots &\ddots &\vdots &\vdots \\
0 & 0 & 0 & \cdots & 0 & A_1
\end{bmatrix}
\begin{bmatrix}
X_1'\\
X_2'\\
\vdots\\
X_{n-1}'
\end{bmatrix}
= \vec{b},
\]
where $\vec{b}$ is $n(n-1)/2 \times 1$ column vector consisting of strictly upper anti-diagonal entries from $B$. Note that the coefficient matrix of the above equation is nonsingular, since its determinant equals to $\det(A_{1}) \cdot \cdots \cdot \det(A_{n-1}),$ which is nonzero  by assumption. This completes the proof. 
\end{proof}

\section{Proof of Theorem \ref{thm:nonprimality}}\label{sect:proof}

\subsection{Suitable change of variables}
Let $x_{i,j}:= x_i-x_j$. We show the following is sufficient to prove Theorem \ref{thm:nonprimality}:

\begin{proposition}\label{prop:sigma(13)(24)} We have \begin{equation*}
    (x_{1,2})^{2n-1}(x_{3,4})^{2n-1} \in \langle (x_{1,3})^{2n}, (x_{1,4})^{2n}, (x_{2,3})^{2n}, (x_{2,4})^{2n}\rangle.
\end{equation*}
\end{proposition}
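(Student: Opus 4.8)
The plan is to pass to shifted coordinates. Setting $\delta = x_1-x_2$, $x = x_1-x_3$, $y = x_1-x_4$, one has $x_{1,2}=\delta$, $x_{1,3}=x$, $x_{1,4}=y$, $x_{2,3}=x-\delta$, $x_{2,4}=y-\delta$ and $x_{3,4}=y-x$, so Proposition~\ref{prop:sigma(13)(24)} becomes the membership
\[
\delta^{2n-1}(x-y)^{2n-1}\ \in\ J:=\langle x^{2n},(x-\delta)^{2n},y^{2n},(y-\delta)^{2n}\rangle\subseteq k[\delta,x,y].
\]
(For $n=1$ this is exactly the identity $2\delta(x-y)=x^2-(x-\delta)^2-y^2+(y-\delta)^2$, i.e.\ equation~(\ref{n=1}).) I would then exploit the involution $\iota\colon x\leftrightarrow y$ of $k[\delta,x,y]$ (fixing $\delta$): it preserves $J$ and sends $\delta^{2n-1}(x-y)^{2n-1}$ to its negative, so it suffices to produce an $\iota$-skew certificate
\[
\delta^{2n-1}(x-y)^{2n-1}=P\,x^{2n}+Q\,(x-\delta)^{2n}-\iota(P)\,y^{2n}-\iota(Q)\,(y-\delta)^{2n}.
\]
Moreover it is enough to find such a $Q$ working only modulo $\langle x^{2n},y^{2n}\rangle$: if $\delta^{2n-1}(x-y)^{2n-1}-Q(x-\delta)^{2n}+\iota(Q)(y-\delta)^{2n}$ lies in $\langle x^{2n},y^{2n}\rangle$, then, being automatically $\iota$-skew, it can be rewritten as $P\,x^{2n}-\iota(P)\,y^{2n}$ for a suitable $P$, using that $x^{2n}$ and $y^{2n}$ form a regular sequence. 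I would write this reduction out in detail, as it is the only place the coprimality of $x^{2n}$ and $y^{2n}$ is used.

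The substance is the congruence $\delta^{2n-1}(x-y)^{2n-1}\equiv Q\,(x-\delta)^{2n}-\iota(Q)\,(y-\delta)^{2n}\pmod{\langle x^{2n},y^{2n}\rangle}$, which I would recast as a matrix equation. Everything here is homogeneous of degree $4n-2$, and a homogeneous element of $k[\delta,x,y]/\langle x^{2n},y^{2n}\rangle$ of that degree is encoded by the $2n\times 2n$ array of coefficients of $\delta^{\,4n-2-i-j}x^{i}y^{j}$, $0\le i,j\le 2n-1$. Under this dictionary (after an anti-diagonal reindexing of one index) the operator $\iota$ becomes the anti-diagonal transpose $(\,\cdot\,)^{\tau}$ of Lemma~\ref{anti-diagonal}; multiplication by $(x-\delta)^{2n}$, reduced modulo $x^{2n}$, becomes multiplication by a truncated Pascal matrix $A$ whose entries are, up to signs and fixed powers of $\delta$, the binomial coefficients $\binom{2n}{i-j}$ — precisely the matrices of Proposition~\ref{prop:minormatrix}; the unknown $Q$, being homogeneous of degree $2n-2$, has coefficient array supported strictly inside the critical anti-diagonal and so corresponds to a strictly (lower-)triangular matrix $X$; and $\delta^{2n-1}(x-y)^{2n-1}$ corresponds to a matrix $B$ concentrated on a single anti-diagonal, with entries the signed binomials $(-1)^{j}\binom{2n-1}{j}$, satisfying $B^{\tau}=-B$ and having vanishing anti-diagonal. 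With these identifications the congruence is exactly
\[
AX-(AX)^{\tau}=B,
\]
so Lemma~\ref{anti-diagonal} applies and produces $X$, hence $Q$: its hypothesis that all upper-right submatrices of $A$ are nonsingular translates, after the anti-diagonal reindexing, into the invertibility of the lower-left binomial submatrices $A_m$, which is the Schur-polynomial positivity established in Proposition~\ref{prop:minormatrix}. Back-substituting and undoing the change of coordinates then yields explicit polynomials witnessing Proposition~\ref{prop:sigma(13)(24)}, hence Theorem~\ref{thm:nonprimality}.

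The main obstacle is the bookkeeping in the previous paragraph: choosing the monomial basis and the placement of the reversal matrix $J$ so that the congruence lands in exactly the form $AX-(AX)^{\tau}=B$ demanded by Lemma~\ref{anti-diagonal} — in particular, so that $X$ is strictly triangular in the correct direction, so that $B$ genuinely has zero anti-diagonal, and, crucially, so that the submatrices of $A$ whose nonsingularity is needed are the nontrivial lower-left binomial minors of the Pascal matrix (for which Proposition~\ref{prop:minormatrix} is essential) rather than its trivially unipotent principal minors. Everything else — the degree count forcing $X$ to be strictly triangular, the explicit signed-binomial entries of $A$ and $B$, the verification $B^{\tau}=-B$, and the final lifting of the congruence to an honest identity in $k[\delta,x,y]$ — is routine.
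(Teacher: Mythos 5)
Your proof is correct and ultimately rests on the same two technical tools as the paper's — Proposition~\ref{prop:minormatrix} for nonsingularity of the binomial minors, and Lemma~\ref{anti-diagonal} for solving $AX-(AX)^\tau=B$ — but it organizes the reduction in a way that explains a step the paper leaves as an unmotivated ansatz. After a linearly equivalent change of variables (the paper uses $x=x_{1,2}$, $y=-x_{3,4}$, $\delta=x_{2,4}$, turning the generators into $\delta^{2n},(\delta+x)^{2n},(\delta+y)^{2n},(\delta+x+y)^{2n}$ and the target into $(xy)^{2n-1}$), the paper dehomogenizes and compares coefficients of $s^k$: the terms of $s$-degree $\ge 2n$ are used first to eliminate one unknown in favor of another, the remainder yields the matrix equation $\Lambda C+D\Lambda=B$, and the paper then simply \emph{posits} $D=-C^\tau$ to land in the form Lemma~\ref{anti-diagonal} accepts, explaining the symmetry only in a remark afterward. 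You instead observe \emph{a priori} that the involution $\iota\colon x\leftrightarrow y$ preserves the ideal and negates the target, so an $\iota$-skew certificate exists if any certificate does, and you build the $-(\,\cdot\,)^\tau$ structure in from the start; working modulo $\langle x^{2n},y^{2n}\rangle$ and recovering $P$ via the regular-sequence property then absorbs the paper's elimination step. This is a genuinely cleaner framing that makes the skew structure intrinsic. The bookkeeping you flag is real work, and it is exactly where the paper expends its index-chasing: in the obvious monomial basis the multiplication-by-$(x-\delta)^{2n}$ matrix is triangular with trivially nonsingular \emph{principal} minors, and the anti-diagonal reindexing must be arranged so that Lemma~\ref{anti-diagonal} ends up demanding nonsingularity of the nontrivial lower-left/upper-right binomial blocks — precisely the content of Proposition~\ref{prop:minormatrix}, and the only nonformal input — while simultaneously making $X$ strictly triangular in the orientation the lemma expects and making $B$ diagonal (hence with vanishing anti-diagonal, since the matrices have even size) with $B^\tau=-B$. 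Once that dictionary is pinned down the two proofs coincide.
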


\begin{proof}[Proof of Theorem \ref{thm:nonprimality}, assuming Proposition \ref{prop:sigma(13)(24)}] If $\sigma$ sends $x_1 \to x_i$ and $x_2 \to x_j$,  set $x_3= x_i$, and $x_4=x_j$. Then we see that 
\begin{equation*}
    f\cdot \sigma(g) = (x_{1,2})^{2n-1}(x_{i,j})^{2n-1} \in \langle (x_{1,i})^{2n}, (x_{1,j})^{2n}, (x_{2,i})^{2n}, (x_{2,j})^{2n}\rangle  \subset I(2n).    
\end{equation*}
Hence $f \cdot \sigma(g) \in I(2n)$ for all $\sigma$.
\end{proof}

To prove the above proposition, we will make a change of variables. Set $S = k[\delta, x, y]$, and define

\begin{equation*}
    J(2n) :=\langle \delta^{2n}, (\delta+x)^{2n}, (\delta+y)^{2n},(\delta+x+y)^{2n}\rangle.
\end{equation*} 
If we write $x = x_{1,2}$, $y = -x_{3,4}$, and $\delta = x_{2,4}$, we see that 
\begin{equation*}
  (x_{1,2})^{2n-1}(x_{3,4})^{2n-1} = -(xy)^{2n-1},  
\end{equation*}
and
\begin{equation*}
    \langle (x_{1,3})^{2n}, (x_{1,4})^{2n}, (x_{2,3})^{2n}, (x_{2,4})^{2n}\rangle = J(2n).
\end{equation*}
Hence, Proposition \ref{prop:sigma(13)(24)} follows from the following lemma:
\begin{lemma}\label{lemma:changeofvariables} We have $(x y)^{2n-1} \in J(2n)$.

\end{lemma}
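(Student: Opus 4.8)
The plan is to realize $(xy)^{2n-1}$ as an explicit $S$-linear combination of the four generators $\delta^{2n}$, $(\delta+x)^{2n}$, $(\delta+y)^{2n}$, $(\delta+x+y)^{2n}$ of $J(2n)$ by a linear-algebra argument in the variable $\delta$, treating $k[x,y]$ as the coefficient ring. The key observation is that all four generators are of the form $(\text{linear in }\delta,x,y)^{2n}$, so when expanded in $\delta$ they have the shape $\sum_{k=0}^{2n}\binom{2n}{k}\delta^{2n-k}c^k$ for $c\in\{0,x,y,x+y\}$; in particular each is a polynomial of degree exactly $2n$ in $\delta$ with binomial coefficients appearing in a predictable pattern. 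Since $J(2n)$ already contains $\delta^{2n}$, working modulo $\delta^{2n}$ we may reduce $(\delta+x)^{2n}$, $(\delta+y)^{2n}$, $(\delta+x+y)^{2n}$ to polynomials of degree at most $2n-1$ in $\delta$ whose coefficients are polynomials in $x,y$, and the goal becomes to produce $(xy)^{2n-1}$ — a constant in $\delta$ — as a combination of these. This is where the matrix $A$ of Proposition~\ref{prop:minormatrix} and the invertibility of its lower-left submatrices $A_m$ enter: the coefficients of the reduced generators, read off in the $\delta$-degrees $2n-1,2n-2,\dots,0$, form (up to the factors $x^j$, $y^j$, $(x+y)^j$) exactly such a binomial array, so a suitable submatrix being invertible lets us solve for the combination that kills all positive $\delta$-powers and leaves a prescribed constant term.

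Concretely, I would first multiply the three nonconstant generators by appropriate polynomials in $x,y$ and subtract multiples of $\delta^{2n}$ so that, after this normalization, the $\delta$-expansions of the adjusted generators have leading coefficients in a lower-triangular-with-binomials pattern. Then I would set up the linear system whose unknowns are the polynomial multipliers $p_1(x,y),p_2(x,y),p_3(x,y)$ (for the three nonconstant generators) and $p_0(\delta,x,y)$ (for $\delta^{2n}$, which can absorb anything), and whose equations demand that the coefficient of $\delta^j$ vanish for $1\le j\le 2n-1$ while the coefficient of $\delta^0$ equals $(xy)^{2n-1}$. The coefficient-of-$\delta^j$ conditions for $j\ge 1$ do not involve $p_0$, so they form a square (or solvable) system in $p_1,p_2,p_3$; Proposition~\ref{prop:minormatrix} (or its remark, about the matrix with entries $\binom{N}{m+i-j}$) guarantees the relevant coefficient matrix is nonsingular, hence this system has a unique solution over the fraction field $k(x,y)$. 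One then checks the solution is actually polynomial, or clears denominators and folds the extra factor into $p_0$ and a final scaling of the target, and verifies the $\delta^0$-equation produces a nonzero multiple of $(xy)^{2n-1}$, which suffices since $J(2n)$ is an ideal.

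The main obstacle I anticipate is precisely the bookkeeping that turns the ``solve a binomial linear system'' idea into a clean statement: one must choose the normalizing factors ($x^?$, $y^?$, $(x+y)^?$, and the correction by $\delta^{2n}$) so that the resulting coefficient matrix is literally one of the matrices covered by Proposition~\ref{prop:minormatrix} rather than merely ``morally'' of that form, and one must confirm that the resulting $p_1,p_2,p_3$ — which a priori live in $k(x,y)$ — are genuine polynomials (or that whatever denominator appears is harmless because we only need membership up to a unit). The antidiagonal-transposition Lemma~\ref{anti-diagonal} suggests the author may instead package the whole computation symmetrically: encode the four generators and the target in an $n\times n$ (or $2n\times 2n$) matrix identity of the form $AX-(AX)^\tau=B$, where $B$ records the antisymmetrized relation among the generators and $X$ the sought multipliers, so that the existence half of Lemma~\ref{anti-diagonal} immediately yields the required $X$. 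I would try both routes, but expect the second — reducing Lemma~\ref{lemma:changeofvariables} to an application of Lemma~\ref{anti-diagonal} — to be the intended one, with the only real work being the verification that $B$ has the required properties (zero antidiagonal and $B^\tau=-B$) and that the upper-right submatrices of the relevant $A$ are nonsingular, which is exactly Proposition~\ref{prop:minormatrix}.
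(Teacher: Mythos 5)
Your primary route does not work as stated. You restrict the multipliers $p_1,p_2,p_3$ of the non-constant generators to lie in $k[x,y]$ (possibly after localizing), but the multipliers genuinely need to involve $\delta$. This is visible already in the paper's worked example for $2n=4$: the multiplier of $(\delta+x)^4$ is $-\tfrac{1}{5}\delta^2+\tfrac{1}{5}\delta x+\tfrac{2}{5}x^2-\tfrac{3}{5}\delta y+\tfrac{9}{10}xy$, which has $\delta$-terms. A dimension count makes the obstruction concrete: if $p_1,p_2,p_3$ are homogeneous of degree $2n-2$ in $x,y$ alone they carry $3(2n-1)$ coefficients, while matching the coefficients of $\delta^j$ for $0\le j\le 2n-1$ imposes $\sum_{j=0}^{2n-1}(4n-1-j)=6n^2-n$ linear conditions; for $n\ge 2$ this is hopelessly overdetermined, and the $\delta^{2n}$-absorption trick does not reduce it. So one must allow all four multipliers to depend on $\delta,x,y$, which wrecks the clean binomial system you set up (the problem is no longer solving an $A_m$-type system over $k(x,y)$).

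You do correctly guess that Lemma~\ref{anti-diagonal} and Proposition~\ref{prop:minormatrix} are the engine, and you even guess an identity of the shape $AX-(AX)^\tau=B$; but you don't see the move that makes this appear. The paper's key step is to allow $\delta$-dependence, then \emph{dehomogenize}: divide \eqref{eqn:xy in I(2n)} by $\delta^{4n-2}$ and substitute $s=1+x/\delta$, $t=y/\delta$, reducing the problem to finding $a,b,c,d\in k[s,t]$ of degree $\le 2n-2$ with $(s-1)^{2n-1}t^{2n-1}=a+b\,s^{2n}+c\,(1+t)^{2n}+d\,(s+t)^{2n}$. Comparing $s$-degrees $\ge 2n$ determines $b$ from $d$ via an invertible unipotent system; comparing $s$-degrees $0$ through $2n-1$ then produces the Sylvester-type matrix equation $\Lambda C+D\Lambda=B$ with $\Lambda$ unipotent upper-triangular binomial, $B$ diagonal with alternating-sign binomial entries, and $C,D$ the unknown strictly-lower-triangular coefficient arrays. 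Only after this change of variables does Lemma~\ref{anti-diagonal} (which needs the upper-right minors of $\Lambda$ to be nonsingular — that is where Proposition~\ref{prop:minormatrix} actually enters) yield a solution, with $D=-C^\tau$. Without the $(s,t)$-reduction the matrix identity you are hoping for never materializes, so the proposal as written has a real gap in the middle.
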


\subsection{The Proof} Finally, we will complete the proof of Theorem \ref{thm:nonprimality} by showing Lemma \ref{lemma:changeofvariables}. 

\begin{proof}[Proof of Lemma \ref{lemma:changeofvariables}]
We need to find homogeneous polynomials 
$\fd,\fx,\fy,\fxy \in k[\delta, x,y]$ of degree $2n-2$ such that
\begin{equation}\label{eqn:xy in I(2n)}
(xy)^{2n-1} = \fd\delta^{2n}+ \fx(\delta+x)^{2n}+ \fy(\delta+y)^{2n}+\fxy(\delta+x+y)^{2n}   
\end{equation}
Divide both sides by $\delta^{4n-2}$. We then get

\begin{equation*}
    \left(\frac{x}{\delta}\right)^{2n-1} \cdot \left( \frac{y}{\delta} \right)^{2n-1} = f_\delta'+ f_{\delta+x}'\cdot\left(1+\frac{x}{\delta}\right)^{2n} +f'_{\delta+y}\cdot\left(1+ \frac{y}{\delta}\right)^{2n} +f_{\delta+x+y}'\cdot\left(1+\frac{x}{\delta}+\frac{y}{\delta}\right)^{2n},
\end{equation*}
where $f_*'$ are polynomials of degree at most $2n-2$ obtained by dehomogenization of $f_*$'s:
\[f_*'\left(\frac{x}{\delta}, \frac{y}{\delta}  \right) = f_*\left(1, \frac{x}{\delta}, \frac{y}{\delta}\right)\]
We will find the equations between the coefficients, and although such comparison is possible without dehomogenization, it is more convenient to eliminate one of the variables. 
Then set $s = 1+x/\delta$, and $t=y/\delta$. Define $a,b,c,d$ to be polynomials in $s$ and $t$ obtained from $f_\delta', \cdots, f_{\delta+x+y}'$ after the change of variables. Then, we are reduced to show that there exist polynomials $a, b, c, d$ in $s,t$ with degrees $\leq 2n-2$ such that
\begin{equation}\label{eqn:for st}
(s-1)^{2n-1}t^{2n-1}= a + b\cdot s^{2n} + c\cdot (1+t)^{2n} + d \cdot (s+t)^{2n}.   
\end{equation}
Note that we have two gradings, obtained by dehomogenization: one from $s$--degree and another from $t$-degree. Such will allow us to obtain the relationships between the coefficients of the terms, and we will show that there exist a (unique) pair of coefficients that satisfy the relationships between the coefficients of $a, b, c, d$ that we will soon derive.

Put 
\begin{equation*}
    b = \sum_{i =0}^{2n-2}b_is^{i},
\end{equation*}
 where $b_i = b_i(t)$ is a polynomial in $t$ with degree $\leq i$ for $0 \leq i \leq 2n-2$. Similarly write $a = \sum a_is^i$, $c = \sum c_is^{i}$, and $d = \sum d_i s^{i}$. Expand the term $d(s+t)^{2n}$ in \eqref{eqn:for st}, and compare the coefficients of $s^k$: i.e., polynomial in $t$- from $k = 0$ to $k = 4n-2$. 
 
We now limit our attention to terms with $s$--degree $\geq 2n$. Observe that such terms can only appear from $b\cdot s^{2n}$ and $d\cdot (s+t)^{2n}$. As the terms from the expansion $c\cdot(1+t)^{2n}$ cannot have such terms as the degrees of $b$ (and thus $s$--degrees of those) is at most $2n-2$. Thus, we get
\begin{align*}
    d_{2n-2}t^{2n-2}{2n \choose 2n-2}+\cdots +d_{1}t{2n \choose 1}+d_{0} &= -b_{0} \text{ } (\text{for } s^{2n})\\
    &\text{  }\vdots \\
    d_{2n-2}t{2n \choose 1}+ d_{2n-3}&= -b_{2n-3} \text{ } (\text{for } s^{4n-3})\\
    d_{2n-2}&= -b_{2n-2} \text{ } (\text{for } s^{4n-2}).
\end{align*}
Multiply the first equation by $1/(s^{2n})$, thesecond by $t/s^{2n+1}$, $\cdots$, and the last equation by $t^{2n}/s^{4n-2}$. We then get the following equation:

\begin{equation}\label{relbnd}
\begin{bmatrix}
1 & {2n \choose 1} & \cdots& {2n \choose 2n-2} \\ \\
0 & 1 & \cdots & {2n \choose 2n-3}\\ \\
\vdots & \vdots&\vdots & \vdots\\ \\ 
0 & 0 &\cdots & 1
\end{bmatrix}
\cdot 
\begin{bmatrix}d_{0} \\ \\
d_{1}t \\ \\
\vdots \\ \\
d_{2n-2}t^{2n-2}
\end{bmatrix} 
= 
- 
\begin{bmatrix}
b_{0} \\ \\
b_{1}t \\ \\ 
\vdots \\ \\
b_{2n-2}t^{2n-2}
\end{bmatrix}.
\end{equation}
Thus, we see that the $d$'s uniquely determine the $b$'s.

Now, compare the terms with $s$-degree from zero to to $2n-1$ from the equation \eqref{eqn:for st}. Now the terms we have come from the remaining terms of $d\cdot(s+t)^{2n}$, from $a$ and $c\cdot(1+t)^{2n}$, and lastly from the expansion of $(s-1)^{2n-1}t^{2n-1},$ where the coefficients are entirely determined by binomial coefficients. We write the terms in that order:

\begin{align}\label{relations between c,d}
    d_{0} t^{2n} &= -a_{0}-c_{0}(1+t)^{2n}+ (-1)^{2n-1} { 2n-1 \choose 2n-1}t^{2n-1} \nonumber \\
    \sum_{i= 2n-1}^{2n}d_{i-(2n-1)}{2n \choose i}t^{i} &=-a_{1}-c_{1}(1+t)^{2n} + (-1)^{2n-2} {2n-1 \choose 2n-2} t^{2n-1}  \nonumber \\
    &\vdots\\ 
    \sum_{i=2}^{2n} d_{i-2} {2n \choose i}t^{i} &= -a_{2n-2}-c_{2n-2}(1+t)^{2n}+(-1)^{1} {2n-1 \choose 1}t^{2n-1} \nonumber \\
    \sum_{i=1}^{2n-1}d_{i-1} {2n \choose i}t^i &= -0-0+(-1)^0 {2n-1 \choose 0}t^{2n-1} \nonumber 
\end{align}
Write 
\begin{equation*}
 c_{k} = c_{k,0}+\cdots + c_{k,2n-2-k}t^{2n-2-k}, \text{ and }d_{k} = d_{k,0}+d_{k,1}t+\cdots+d_{k,2n-2-k}t^{2n-2-k}.
\end{equation*}

Here, note that $c_{k}$ and $d_{k}$ are polynomials in $t$ of degree $2n-2-k$, and $c_{i,j}$ and $d_{i,j}$ are scalar coefficients, of polynomials $c_i$ and $d_i$ respectively. Recall that $\deg a_i \leq i$, so $a_i$'s can be thought as the remainder of $(RHS) - (LHS)$ with respect to $t^i$. Hence, if we consider the quotient of $(LHS)$ and $(RHS)$ of the equation \eqref{relations between c,d} with respect to the divisor $t^i$, we get the following equations system of $2n - 2$ linear equations involving $c_{ij}$'s and $d_{ij}$'s:
\begin{equation}\label{eqn:theeqn}
    \Lambda C + D \Lambda = B.
\end{equation}
Here, $\Lambda$ is an upper triangular unipotent matrix

\begin{equation}
    \Lambda = 
    \begin{bmatrix}
1 & {2n \choose 1} & {2n \choose 2}&\cdots &{2n \choose 2n - 3} &{2n \choose 2n-2} \\ \\
0 & 1 & {2n \choose 1} & \cdots & {2n \choose 2n - 4} & {2n \choose 2n-3}\\ \\
\vdots & \vdots &\vdots & \vdots &\vdots & \vdots\\ \\
0 & 0 & 0 &\cdots & 1& {2n \choose 1} \\ \\ 
0 & 0 & 0 &\cdots & 0 & 1
\end{bmatrix},
\end{equation}
and $B$ is a diagonal matrix
\begin{equation}
    B = \textbf{Diag} \left[(-1)^{2n - 1} {2n - 1 \choose 2n - 1}, (-1)^{2n - 2} {2n - 1 \choose 2n - 2}, \cdots, (-1)^{0} {2n - 1 \choose 0}\right],
\end{equation}
and $C$, $D$ are the following strictly lower trianglular matrices
\begin{equation}
    C, D = 
    \begin{bmatrix}
0 & 0 & 0 & \cdots& 0 \\ \\
c_{0, 0} & 0 & 0 &  \cdots & 0\\ \\
c_{0, 1} & c_{1,0} & 0 & \cdots & 0\\ \\
\vdots & \vdots&\vdots &\vdots & \vdots\\ \\ 
c_{0, 2n-2} & c_{1, 2n-3} & c_{2, 2n - 4} &\cdots & c_{2n-2, 0}
\end{bmatrix},
    \begin{bmatrix}
0 & 0 & 0 & \cdots& 0 \\ \\
d_{0, 0} & 0 & 0 &  \cdots & 0\\ \\
d_{0, 1} & d_{1,0} & 0 & \cdots & 0\\ \\
\vdots & \vdots&\vdots &\vdots & \vdots\\ \\ 
d_{0, 2n-2} & d_{1, 2n-3} & d_{2, 2n - 4} &\cdots & d_{2n-2, 0}
\end{bmatrix}.
\end{equation}

Using Proposition \ref{prop:minormatrix}, we see that $\Lambda, B$ satisfies the condition of Lemma \ref{anti-diagonal}. Hence by Lemma \ref{anti-diagonal}, there is a pair $(C, D)$ with $D = - C^{\tau}$ that satisfies \eqref{eqn:theeqn}. This completes the proof.
\end{proof}

\begin{remark} Conceptually, the reason why we expect an equation symmetric in $C$ (the coefficient matrix of $c$) and $D$(those of $d$) is that once we apply different change of variables and dehomogenizations to \eqref{eqn:xy in I(2n)}, we can actually interchange $c$ and $d$. Consider the (well-defined linear) map $\phi$ induced by 
\begin{align}
\delta &\mapsto \delta + x, \\
\delta + x &\mapsto \delta, \\
\delta + y &\mapsto \delta + x + y, \\
\delta + x+ y &\mapsto \delta + y,
\end{align}
and dehomogenize after applying $\phi$. Then, $c$ and $d$ are mapped to $d$ and $c$. As a consequence we expect to observe some symmetry, i.e., one that we see from \eqref{eqn:theeqn}.

\end{remark}

\begin{example}
By unraveling, we can compute explicit polynomials $a,b,c,d$ satisfying $(xy)^{2n-1} = a \delta^{2n} + \cdots + d (\delta+x+y)^{2n}$ for any specific values of $n$. For instance, when $2n=4$, we have 
\begin{center}
$\begin{bmatrix}
a(\delta,x,y)\\
\\
b(\delta,x,y)\\
\\
c(\delta,x,y)\\
\\
d(\delta,x,y)
\end{bmatrix}=
\begin{bmatrix}
\frac{1}{5} \delta^2+ \frac{3}{5}\delta x + \frac{3}{5} \delta y + \frac{3}{2}xy\\
\\
-\frac{1}{5}\delta^2 + \frac{1}{5}\delta x + \frac{2}{5}x^2-\frac{3}{5}\delta y + \frac{9}{10}xy\\
\\
-\frac{1}{5}\delta^2 - \frac{3}{5}\delta x +\frac{1}{5}\delta y + \frac{9}{10}xy + \frac{2}{5}y^2\\
\\
\frac{1}{5}\delta^2 -\frac{1}{5}\delta x - \frac{2}{5}x^2 - \frac{1}{5}\delta y +\frac{7}{10}xy -\frac{2}{5}y^2
\end{bmatrix},
$
\end{center}
and we can check that indeed,
\begin{equation*}
  a\delta^{4}+b(\delta+x)^4+c(\delta+y)^4+d(\delta+x+y)^4 = (xy)^3.  
\end{equation*}

\end{example}

\begin{remark} We can replace $k$, a field of characteristic zero, with any field of characteristic $p$ such that the statement of Theorem \ref{prop:minormatrix} holds.
\end{remark}

In fact, we can show the following general result:
\begin{theorem}\label{theorem:generalization} Let $N$ be a positive integer. Then the contracted ideal $\fJ(N) := J(N) \cap k[x,y]$ contains $\langle x^{2N-1}, y^{2N-1}, x^{2i+1}y^{2j+1}| i+j = N-2 \rangle$. 
\end{theorem}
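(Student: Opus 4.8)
The plan is to reduce Theorem~\ref{theorem:generalization} to the same mechanism that proved Lemma~\ref{lemma:changeofvariables}, replacing the single exponent $2n$ by a general $N$ and tracking which monomials in $x,y$ arise. First, for the generator $(xy)^{2i+1}y^{2j+1}$... more precisely, the key observation is that the argument establishing $(xy)^{2n-1} \in J(2n)$ never really used that the target degree $2n-1$ was tied to the exponent in the ideal in any essential way beyond a numerical balance. So I would start by asking: for which exponents $p,q \ge 0$ does $x^p y^q$ lie in $\fJ(N) = J(N) \cap k[x,y]$? Mimicking the dehomogenization step, I would try to write
\begin{equation*}
x^p y^q = \fd \delta^{N} + \fx(\delta+x)^{N} + \fy(\delta+y)^{N} + \fxy(\delta+x+y)^{N}
\end{equation*}
with $\fd,\fx,\fy,\fxy$ homogeneous of degree $p+q-N$ (so we need $p+q \ge N$), then divide by $\delta^{p+q}$ and set $s = 1+x/\delta$, $t = y/\delta$, arriving at an analogue of \eqref{eqn:for st}:
\begin{equation*}
(s-1)^{p} t^{q} = a + b s^{N} + c(1+t)^{N} + d(s+t)^{N},
\end{equation*}
with $a,b,c,d$ polynomials in $s,t$ of degree $\le p+q-N$ in each relevant variable.

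The next step is to run the same bookkeeping as in the proof of Lemma~\ref{lemma:changeofvariables}: separate the terms of $s$-degree $\ge N$ (which force the $b$'s to be determined by the $d$'s via an invertible triangular system, using Proposition~\ref{prop:minormatrix}), and the terms of $s$-degree $< N$ (which produce a matrix equation of the shape $\Lambda C + D\Lambda = B$). The crucial input is that the right-hand side $B$ should be anti-diagonal-zero and satisfy $B^\tau = -B$ so that Lemma~\ref{anti-diagonal} applies. In the original proof $B$ was diagonal with entries $(-1)^{N-1-\ell}\binom{N-1}{\ell}$ coming from the expansion of $(s-1)^{N-1}$, and the matrix $B$ was arranged to be genuinely anti-diagonal by a shear of indices; the self-antisymmetry is what makes the solvability work. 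So the heart of the matter is to identify exactly which exponents $p = 2i+1$, $q = 2j+1$ (odd, with $i+j = N-2$, so $p+q = 2N-2 \ge N$) and $p = 2N-1$, $q=0$ (and symmetrically) make the resulting $B$-matrix anti-diagonal-antisymmetric. I expect the parity conditions $p,q$ odd and the degree condition $p+q = 2N-2$ to be precisely what forces the sign pattern $(-1)^{p-\cdots}$ appearing in the expansion of $(s-1)^p t^q$ to line up antisymmetrically across the anti-diagonal, just as $(-1)^{2n-1}=-1$ did in the base case; the boundary cases $x^{2N-1}$ and $y^{2N-1}$ I would handle either by the same computation with $q=0$ or by a cruder direct argument (e.g.\ $x^{2N-1}$ is nearly a single power and one can use the identity $\delta^N, (\delta+x)^N$ alone after a linear change).

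The main obstacle will be checking the antisymmetry condition $B^\tau = -B$ in the general case: in the base case this was essentially automatic from $N = 2n$ even and the exponent being $2n-1$, but for general odd $p,q$ with $p + q = 2N-2$ the relevant matrix $B$ is no longer diagonal, and one must verify that the full array of coefficients extracted from $(s-1)^p t^q$ — after the same index-shearing used in \eqref{eqn:theeqn} — is sent to its negative by $J(\cdot)^T J$. I would handle this by writing down the $(k,\ell)$ entry of $B$ explicitly as a signed binomial coefficient times a power of $t$, applying the change of index $(k,\ell) \mapsto (n-\ell, n-k)$ coming from $\tau$, and using the symmetry $\binom{p}{r} = \binom{p}{p-r}$ together with $(-1)^{p-r} = -(-1)^{r}$ (valid precisely because $p$ is odd) to match signs. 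If this antisymmetry fails for some of the claimed generators, the fallback is to prove membership of those monomials directly by exhibiting the cofactors, as in the $2n=4$ example, or by a degeneration/specialization argument from the already-proved case $(xy)^{N-1} \in J(N)$ combined with multiplication by suitable powers of $x$ or $y$ modulo $J(N)$.
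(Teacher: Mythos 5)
Your proposal follows essentially the same route as the paper: the paper's own proof is a one‑line sketch ("the same dehomogenization and comparison of coefficients" for $x^{2i+1}y^{2j+1}$, plus the direct binomial‑expansion argument $x^{2N-1}=((\delta+x)-\delta)^{2N-1}\in\langle\delta^N,(\delta+x)^N\rangle$ for the boundary generators), and your proposal spells out exactly that mechanism, correctly identifying that the off‑diagonal matrix $B$ produced by $(s-1)^pt^q$ still satisfies $B^\tau=-B$ with zero anti‑diagonal because $p+q=2N-2$ keeps the support stable under the anti‑diagonal reflection and the binomial symmetry $\binom{p}{r}=\binom{p}{p-r}$ together with $(-1)^{p-r}=-(-1)^r$ (valid since $p$ is odd) supplies the sign flip. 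The only small inaccuracy is attributing the base‑case antisymmetry to $N=2n$ being even rather than to $p=2n-1$ being odd, but this does not affect the argument.
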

\begin{proof} Without loss of generality assume $i \leq j$. We can show that $x^{2i+1}y^{2j+1} \in J(N)$ with the same dehomogenization and comparison of coefficients, and we see that
\begin{equation*}
    x^{2N-1} = ((\delta+x)-\delta)^{2N-1} \in \langle \delta^N, (\delta+x)^N\rangle
\end{equation*} from binomial expansion.
\end{proof}

Note that Theorem \ref{thm:nonprimality} is a special case of Theorem \ref{theorem:generalization}. Furthermore, we have a following conjecture that the contracted ideal $\fJ(N)$ is actually a monomial ideal:

\begin{conjecture}\label{Conj:contracted ideal} In fact, $\fJ(N)$ equals to $\langle x^{2N-1}, y^{2N-1}, x^{2i+1}y^{2j+1}| i+j = N-2 \rangle$.
\end{conjecture}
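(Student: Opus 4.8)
The final statement is Conjecture~\ref{Conj:contracted ideal}, which asserts that the containment in Theorem~\ref{theorem:generalization} is an equality. The plan is therefore to establish the reverse containment: every monomial (equivalently, every polynomial, since $\fJ(N)$ is homogeneous and the candidate ideal is a monomial ideal) lying in $\fJ(N) = J(N)\cap k[x,y]$ must lie in $M := \langle x^{2N-1}, y^{2N-1}, x^{2i+1}y^{2j+1}\mid i+j=N-2\rangle$. Equivalently, if $x^a y^b \notin M$, then $x^a y^b \notin J(N)$. The monomials \emph{not} in $M$ are exactly those $x^a y^b$ with $a\le 2N-2$, $b\le 2N-2$, and not both of $a,b$ odd with $a+b\le 2N-2$ — i.e. $a+b\le 2N-2$ forces $a$ or $b$ even, together with all $x^ay^b$ having $a+b$ possibly larger but one exponent $\le 2N-2$; one should first pin down this monomial basis of $k[x,y]/M$ precisely.

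The natural approach is to pass to the quotient $k[\delta,x,y]/J(N)$ and show that the images of the monomials $x^ay^b\notin M$ remain $k$-linearly independent there, or at least that no $k$-linear combination supported on $\{x^ay^b : x^ay^b\notin M\}$ lies in $J(N)$. Because $J(N)=\langle \delta^N,(\delta+x)^N,(\delta+y)^N,(\delta+x+y)^N\rangle$ is generated by $N$-th powers of four general linear forms in three variables, one can try to produce an explicit linear functional (a point-evaluation, or a differential operator at a point) that kills $J(N)$ but not the target monomial. Concretely, I would set up the apolarity/Macaulay-inverse-system pairing: a homogeneous form $h$ of degree $D$ lies in $J(N)$ iff $h$ is killed by every element of the degree-$D$ part of the inverse system $J(N)^\perp$, and $J(N)^\perp$ in degree $D$ is spanned by the $D$-th "divided-power" restrictions dual to evaluations $\ell \mapsto \ell(\delta,x,y)$ at the four linear forms — more precisely, $J(N)^\perp_D$ is the span of products of $\le N-1$ linear forms dual to $\{\delta,\delta+x,\delta+y,\delta+x+y\}$. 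Then membership $x^ay^b\in J(N)$ (after re-homogenizing with $\delta$) would translate into a system of polynomial identities among binomial coefficients, and the claim becomes: this system is unsolvable exactly when $x^ay^b\notin M$.

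An alternative, possibly cleaner route exploits the dehomogenization already used in the proof of Lemma~\ref{lemma:changeofvariables}: setting $s=1+x/\delta$, $t=y/\delta$, a homogeneous $h(\delta,x,y)$ of degree $D$ lies in $J(N)$ iff its dehomogenization $\bar h(s,t)$ lies in the (non-homogeneous) ideal $\langle 1\cdot(\text{unit}),\, s^N,\,(1+t)^N,\,(s+t)^N\rangle$ — wait, more carefully, $\delta^N$ dehomogenizes to a unit only after dividing by $\delta^D$, so one must track the $\delta$-degree. The honest statement is that $x^ay^b\in J(N)$ with $a+b<2N$ forces, upon homogenizing to degree that makes all four generators usable, a relation $x^ay^b\delta^{?} = f_\delta\delta^N + f_{\delta+x}(\delta+x)^N + \cdots$; projecting modulo $\delta$ (i.e. dividing by the smallest power and evaluating structure at $\delta=0$) and modulo the other three linear forms in turn gives necessary divisibility constraints. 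Iterating these constraints should corner the exponents into the region defining $M$. I would carry this out by: (i) fixing the monomial basis of $k[x,y]/M$; (ii) for each such monomial $x^ay^b$, homogenizing with the minimal power of $\delta$ and writing the hypothetical membership identity; (iii) applying the four "reduction mod a generator" operations to derive numerical constraints; (iv) showing the constraints are contradictory unless $x^ay^b\in M$.

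The main obstacle I anticipate is step (iii)–(iv): unlike the \emph{existence} direction (Lemma~\ref{lemma:changeofvariables}), which reduced to invertibility of a single structured matrix via the Schur-polynomial positivity of Proposition~\ref{prop:minormatrix}, the \emph{non}-membership direction has to rule out \emph{all} cofactor choices simultaneously, so a clean single-matrix argument is unlikely; one probably needs either a dimension count (computing $\dim_k (k[\delta,x,y]/J(N))_D$ in each degree $D$ and matching it against $\#\{x^ay^b\notin M\text{ of the right degree}\}$, using that $J(N)$ is an almost complete intersection of four $N$-th powers in three variables, whose Hilbert function is known by Fröberg-type results in this very special "fat point" configuration) or a careful apolarity computation showing the four dual points impose independent conditions in exactly the expected range. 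Establishing the relevant Hilbert function of $k[\delta,x,y]/J(N)$ — i.e. that these four generic $N$-th powers behave as predicted by the Fröberg conjecture, which is known in many small cases and for powers of linear forms by work of Chandler, Anick, and others — is where the real content lies, and I would expect that to be the crux of a complete proof.
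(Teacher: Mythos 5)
This statement is labeled a \emph{Conjecture} in the paper, and the paper offers no proof of it: Theorem~\ref{theorem:generalization} establishes only the containment $\langle x^{2N-1}, y^{2N-1}, x^{2i+1}y^{2j+1}\mid i+j=N-2\rangle \subseteq \fJ(N)$, and the reverse containment is left open. So there is no ``paper proof'' to compare yours against; the only thing to evaluate is whether your sketch actually closes the gap, and it does not.

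Your outline is reasonable in spirit (show that the monomials outside $M$ stay linearly independent modulo $J(N)$, combine with Theorem~\ref{theorem:generalization}), but every load-bearing step is deferred rather than carried out, and the deferrals hide real difficulties. First, the appeal to Fr\"oberg-type or Anick-type Hilbert function results for four $N$-th powers of linear forms in three variables presumes genericity, and the configuration $\{\delta,\,\delta+x,\,\delta+y,\,\delta+x+y\}$ is emphatically \emph{not} generic: it satisfies the affine relation $\delta-(\delta+x)-(\delta+y)+(\delta+x+y)=0$, i.e.\ the four dual points are in special position (a complete intersection of two lines), and the inverse-system dimension counts one gets from general-position results can and do change in such degenerate configurations. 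Second, even a correct Hilbert function for $k[\delta,x,y]/J(N)$ only bounds $\dim_k$ in each degree; it does not by itself single out the monomials $x^ay^b\notin M$ as a linearly independent set in the quotient, nor does it rule out a non-monomial element of $\fJ(N)$ outside $M$ (you implicitly assume $\fJ(N)$ is a monomial ideal, which is part of what the conjecture asserts). Third, your description of the monomials outside $M$ is acknowledged to be imprecise, and your description of $J(N)^{\perp}_D$ as ``products of $\le N-1$ linear forms dual to the four forms'' is not the correct Macaulay inverse system for an ideal generated by powers of linear forms (the correct statement involves annihilators under the contraction action, or equivalently an ideal of fat points, and requires care). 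In short, the plan identifies plausible tools but does not verify that any of them apply here, and the specific degeneracy of the four linear forms is exactly the kind of obstruction that can make the Fr\"oberg-style shortcut fail; this remains an open problem and your proposal does not resolve it.
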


\section{Open Problems}\label{sect:openproblems}
In this section, we give more conjectures that generalize Theorem \ref{thm:nonprimality} and Theorem \ref{theorem:generalization}.
\subsection{More variables} Fix  $r, n \in \N$, and for a subset $S$ of $[r] := \{1,2, \cdots, r\}$, define $t_S = \sum_{i \in S} t_i$, and $t_{S,0} = t_0 + t_S$. Here, $t_0$ is a special variable, that takes a role of $\delta$ in \eqref{eqn:xy in I(2n)}. We then define $\ird \subset R := K[t_0, \cdots, t_r]$ to be  
\begin{equation*}
  \ird:= \langle (t_{S,0})^d: S \subset [r]\rangle.  
\end{equation*}

\begin{conjecture}\label{Conj:more variables} For any $r\geq 2$ and $n\geq 1$, we have 
\begin{equation*}
  \left( t_1\cdots t_r\right)^{2n-1} \in I_{r}^{(nr)}.  
\end{equation*}
\end{conjecture}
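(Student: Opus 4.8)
\textbf{Proof proposal for Conjecture \ref{Conj:more variables}.}

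The plan is to reduce the claim, exactly as in the two-variable case (Lemma \ref{lemma:changeofvariables}), to a purely linear-algebraic statement about matching coefficients after dehomogenization, and then to set up an inductive or determinantal argument that such a coefficient matching is always solvable. First I would homogenize: we seek polynomials $g_S \in K[t_0,\dots,t_r]$, homogeneous of degree $r(2n-1)-nr = r(n-1)$, with $(t_1\cdots t_r)^{2n-1} = \sum_{S \subset [r]} g_S\,(t_{S,0})^{nr}$. Dividing by $t_0^{r(2n-1)}$ and setting $u_i = t_i/t_0$, the identity becomes $(u_1\cdots u_r)^{2n-1} = \sum_S g_S'(u)\,(1+u_S)^{nr}$ where each $g_S'$ is a polynomial of degree $\le r(n-1)$ in the $u_i$. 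So the real content is: \emph{the ideal generated by the $2^r$ polynomials $(1+u_S)^{nr}$ in $K[u_1,\dots,u_r]$ contains $(u_1\cdots u_r)^{2n-1}$.}

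The key steps, in order, are as follows. (1) Translate once more, writing $s_i = 1 + u_i$ for $i\in[r]$ (or a mixed substitution as in \eqref{eqn:for st}), so that the generators become products/sums that expose several independent gradings — one for each variable — mirroring the $s$-grading and $t$-grading used in the two-variable proof. (2) Organize the unknown coefficients of the $g_S'$ into a multi-indexed array, and write out the linear system obtained by comparing monomial coefficients on both sides; the left-hand side contributes only binomial coefficients, and the structure of $I_r^{(nr)}$ should force the ``high-degree'' coefficients to be determined by the ``low-degree'' ones, as in \eqref{relbnd}. (3) Identify the analogue of Lemma \ref{anti-diagonal}: the $r=2$ proof exploited an $S_2$-symmetry (swapping the roles of two generators via the map $\phi$) to reduce solving $\Lambda C + D\Lambda = B$ to invertibility of certain submatrices of $\Lambda$; for general $r$ one expects an $(S_2)^{r}$ or $S_{r+1}$ symmetry among the $2^r$ generators $(t_{S,0})^{nr}$, and the goal is to use it to decouple the system into blocks each governed by a matrix whose nonsingularity follows from Proposition \ref{prop:minormatrix} (Schur-polynomial positivity) or a multivariate generalization thereof. (4) Conclude that the system has a solution, hence $g_S$ exist, hence $(t_1\cdots t_r)^{2n-1}\in I_r^{(nr)}$.

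The main obstacle I expect is step (3): in the two-variable case the symmetric matrix equation $\Lambda C + D\Lambda = B$ could be solved precisely because the triangular structure left exactly one strictly-lower-triangular degree of freedom matched against the anti-symmetric part of $B$, and the relevant submatrices of a single Pascal-type matrix $\Lambda$ were invertible by the Jacobi--Trudi/Schur-positivity argument. For $r \ge 3$ the number of generators grows like $2^r$ while the target degree budget $r(n-1)$ grows only linearly in $r$, so the linear system is heavily overdetermined on its face; one must show the binomial-coefficient relations are consistent, which likely requires a genuinely new combinatorial identity (a multivariate Vandermonde/Schur computation) rather than a direct citation of Proposition \ref{prop:minormatrix}. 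A secondary difficulty is bookkeeping: the clean ``columns of $X$'' decomposition in Lemma \ref{anti-diagonal} has no obvious $r$-variable analogue, so one may instead proceed by induction on $r$, splitting off the variable $u_r$ — writing $(1+u_S)^{nr}$ for $S\ni r$ and $S\not\ni r$ separately and using the $r-1$ case on the coefficient polynomials in $u_r$ — though making the induction close will require careful control of degrees. If a fully structural proof resists, a fallback is to verify the statement for small $r$ and $n$ by the explicit coefficient computation (as in the worked $2n=4$ example) and leave the general case as stated.
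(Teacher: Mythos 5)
The statement you are addressing is Conjecture~\ref{Conj:more variables}, not a theorem: the paper offers no proof of it, only the remark that it has been verified by computer for some small values of $r\ge 3$. There is therefore no argument in the paper to compare against, and your write-up --- by your own admission --- is a plan rather than a proof.

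That said, your plan correctly identifies both the natural strategy and the place where it breaks down. Steps (1)--(2), the dehomogenization and coefficient matching, are sound and run parallel to what the paper does for $r=2$ in the proof of Lemma~\ref{lemma:changeofvariables}: one divides by $t_0^{r(2n-1)}$, introduces $u_i = t_i/t_0$, and is reduced to a linear system in the coefficients of the $g_S'$ (which do have degree $r(n-1)$, as you compute) whose entries are binomial coefficients. But step (3) is exactly the missing idea, and your own discussion makes this clear. In the $r=2$ case the system collapsed to a single Sylvester-type matrix equation $\Lambda C + D\Lambda = B$ whose solvability rested on the very specific structure of Lemma~\ref{anti-diagonal} (the anti-diagonal transposition $X\mapsto X^\tau$ and the ``columns of $X$'' decomposition) together with the Schur-positivity input of Proposition~\ref{prop:minormatrix}. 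For $r\ge 3$ the number of generators is $2^r$ while the free coefficient budget grows only polynomially in $r$, so the system is not of the same two-matrix form, there is no evident analogue of the $\tau$-involution, and the consistency of the binomial-coefficient relations would require a new combinatorial identity that neither you nor the paper supplies. Your proposed induction on $r$ by splitting the generators according to whether $r\in S$ is a reasonable idea to try, but as written it is only a heuristic: you do not show that the induced $(r-1)$-variable coefficient problems actually land in $I_{r-1}^{(n(r-1))}$, nor that the degrees close. Your fallback --- verify small cases by computer and leave the general case open --- is precisely what the paper itself does. In short, your proposal faithfully reproduces the reduction and pinpoints the open difficulty, but it does not resolve the conjecture; the genuine content that is still missing is the multivariate replacement for Lemma~\ref{anti-diagonal} at step (3).
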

For $r =1$, the above conjecture reduces to $t_1^{2n-1} \in \langle t_0^n, (t_0+t_1)^n \rangle,$ which follows easily from the binomial expansion of $t^{2n-1} = ((t+\delta)-\delta)^{2n-1}.$ Our theorem gives a positive answer for $r=2$. We have verified the conjecture for some small values of $r \ge 3$ by computer.

\subsection{Betti tables}
We also observe some patterns from Betti tables of the family of ideals $\ird$ with fixed $r$. Let $\mathbf{F}$ be the minimal free resolution of $\ird$, and write 
\[F_i = \bigoplus_{j \in \mathbf{Z}} R(-j)^{\beta_{i,j}}.\]

When $r = 1$, we expect $\beta_{i,j} = 0$ for all $i,j$ except for $(i,j) = (0,0), (d-1, 1),$ and $(2d - 2, 2)$. The nonzero values of $\beta_{i,j}$'s are given by $\beta_{0,0} = \beta_{2d - 2, 2} = 1, \beta_{d - 1, 1} = 2$. See the following Betti table, where all the omitted rows are zero rows, and entries with $\cdot$ is a zero entry:
\begin{center}
\begin{tikzpicture}
\matrix (m) [matrix of nodes,
nodes in empty cells,
row sep=-\pgflinewidth,
column sep=-\pgflinewidth,
nodes={minimum height=5mm,minimum width=16mm,anchor=center},
row 1/.style={nodes={minimum height=9mm,font=\bfseries}},
column 1/.style={nodes={minimum width=16mm}},
]{
      & 0      &  1    & 2     \\ 
0     & 1      &  $\cdot$    & $\cdot$   \\
\vdots&\vdots  & \vdots& \vdots  \\
$d - 1$ &  $\cdot$   & 2   & $\cdot$  \\
\vdots&\vdots  & \vdots& \vdots  \\
$2d - 2$   & $\cdot$     & $\cdot$    & 1  \\
};
\draw[teal] 
(m-1-1.north west)--(m-1-1.south east)
(m-1-1.north east)--(m-6-1.south east)
(m-1-1.north west)--(m-1-4.north east)
(m-1-1.south west)--(m-1-4.south east)
(m-2-1.south west)--(m-2-4.south east)
(m-3-1.south west)--(m-3-4.south east)
(m-4-1.south west)--(m-4-4.south east)
(m-5-1.south west)--(m-5-4.south east)
(m-6-1.south west)--(m-6-4.south east)
(m-1-1.south west) node[above right]{\bfseries \textit{i}}
(m-1-1.north east) node[below left]{\bfseries \textit{j}};
\end{tikzpicture}
\end{center}

For For $r = 2$, we have the following conjectured Betti table obtained for some small values we tested by computer:
\begin{center}
\begin{tikzpicture}
\matrix (m) [matrix of nodes,
nodes in empty cells,
row sep=-\pgflinewidth,
column sep=-\pgflinewidth,
nodes={minimum height=5mm,minimum width=16mm,anchor=center},
row 1/.style={nodes={minimum height=9mm,font=\bfseries}},
column 1/.style={nodes={minimum width=16mm}},
]{
      & 0      &  1    & 2     & 3    \\ 
0     & 1      &  $\cdot$    & $\cdot$     & $\cdot$  \\
\vdots&\vdots  & \vdots& \vdots& \vdots  \\
$ d- 1$&  $\cdot$    & 4     & $\cdot$ & $\cdot$ \\
\vdots&\vdots  & \vdots& \vdots& \vdots \\
$2d - 3$& $\cdot$     &$\cdot$      & $d$   & $\cdot$  \\
$2d - 2$& $\cdot$     &$\cdot$      & 3& $d$ \\ 
};
\draw[teal] 
(m-1-1.north west)--(m-1-1.south east)
(m-1-1.north east)--(m-7-1.south east) % vertical line
(m-1-1.north west)--(m-1-5.north east)
(m-1-1.south west)--(m-1-5.south east)
(m-2-1.south west)--(m-2-5.south east)
(m-3-1.south west)--(m-3-5.south east)
(m-4-1.south west)--(m-4-5.south east)
(m-5-1.south west)--(m-5-5.south east)
(m-6-1.south west)--(m-6-5.south east)
(m-7-1.south west)--(m-7-5.south east)
(m-1-1.south west) node[above right]{\bfseries \textit{i}}
(m-1-1.north east) node[below left]{\bfseries \textit{j}};
\end{tikzpicture}
\end{center}

Lastly, we give the Betti table for $r = 3$:

\begin{center}
\begin{tikzpicture}
\matrix (m) [matrix of nodes,
nodes in empty cells,
row sep=-\pgflinewidth,
column sep=-\pgflinewidth,
nodes={minimum height=5mm,minimum width=16mm,anchor=center},
row 1/.style={nodes={minimum height=9mm,font=\bfseries}},
column 1/.style={nodes={minimum width=16mm}},
]{
      & 0      &  1    & 2     & 3    & 4\\ 
0     & 1      &  $\cdot$    & $\cdot$     & $\cdot$    & $\cdot$\\
\vdots&\vdots  & \vdots& \vdots& \vdots &\vdots \\
$d - 1$&  $\cdot$    & 8     & $\cdot$ & $\cdot$  & $\cdot$\\
\vdots&\vdots  & \vdots& \vdots& \vdots & \vdots \\
$2d - 4$& $\cdot$     &$\cdot$ & $(d - 1)d/2$ & $\cdot$ & $\cdot$ \\
$2d - 3$& $\cdot$     &$\cdot$ & $4d$  & $(d- 1)(d+1)$ & $\cdot$ \\
$2d - 2$& $\cdot$     &$\cdot$ & 6& $4d$ & $d(d-1)/2$ \\ 
};
\draw[teal] 
(m-1-1.north west)--(m-1-1.south east)
(m-1-1.north east)--(m-8-1.south east) % vertical line
(m-1-1.north west)--(m-1-6.north east)
(m-1-1.south west)--(m-1-6.south east)
(m-2-1.south west)--(m-2-6.south east)
(m-3-1.south west)--(m-3-6.south east)
(m-4-1.south west)--(m-4-6.south east)
(m-5-1.south west)--(m-5-6.south east)
(m-6-1.south west)--(m-6-6.south east)
(m-7-1.south west)--(m-7-6.south east)
(m-8-1.south west)--(m-8-6.south east)
(m-1-1.south west) node[above right]{\bfseries \textit{i}}
(m-1-1.north east) node[below left]{\bfseries \textit{j}};
\end{tikzpicture}
\end{center}

There are a number of patterns we can observe: why do we expect each entries of Betti table to stabilize? For instance, why do we expect to get ``6'' at the lower left vertex of the triangle (i.e., $\beta_{2d-2, 3} = 6$ for $r=3$)? More fundamentally, why do we see $r \times r$ right triangle at the lower right corner? Should we expect to observe similar patterns for larger values of $r$?

\end{document}